\theoremstyle{plain}
\newtheorem{theorem}{Theorem}[section]
\theoremstyle{definition}
\newtheorem{definition}[theorem]{Definition}
\newtheorem{example}[theorem]{Example}
\numberwithin{equation}{section}
\numberwithin{figure}{section}
\numberwithin{table}{section}
\DeclareMathOperator{\Prob}{\operatorname{Pr}}
\DeclareMathOperator{\bbr}{\mathbb{R}}
\DeclareMathOperator{\bbn}{\mathbb{N}}
\DeclareMathOperator{\bfX}{\mathbf{X}}
\DeclareMathOperator{\astbfX}{\mathbf{X}^\ast}
\DeclareMathOperator{\bfx}{\mathbf{x}}
\DeclareMathOperator{\bfY}{\mathbf{Y}}
\DeclareMathOperator{\astbfY}{\mathbf{Y}^\ast}
\DeclareMathOperator{\bfy}{\mathbf{y}}
\DeclareMathOperator{\OPD}{\operatorname{OPD}}
\newcommand{\indicator}[1]{1_{\{ #1 \}}}
\newcommand{\brackets}[1]{\left( #1 \right)}
\definecolor{ao}{rgb}{0.0, 0.5, 0.0}
\newcommand{\revision}[1]{\textcolor{black}{#1}}
\begin{document}

\title{Ordinal pattern dependence and multivariate measures of dependence}

\author[A. Silbernagel]{Angelika Silbernagel} 
\author[A. Schnurr]{Alexander Schnurr}

\address{Department of Mathematics, University of Siegen, 57072 Siegen, Germany}
\email{silbernagel@mathematik.uni-siegen.de}

\begin{abstract}
Ordinal pattern dependence has been introduced in order to capture co-monotonic behavior between two time series. This concept has several features one would intuitively demand from a dependence measure. It was believed that ordinal pattern dependence satisfies the axioms which Grothe et al. \revision{(\textit{Journal of Multivariate Analysis} 123, 2014)} proclaimed for a multivariate measure of dependence. In the present article we show that this is not true and that there is a mistake in the article by Betken et al. \revision{(\textit{Journal of Multivariate Analysis} 186, 2021)}. Furthermore\revision{,} we show that ordinal pattern dependence satisfies a slightly modified set of axioms.

\vspace{.3cm}
\noindent \textit{Keywords:} Dependence measures, ordinal patterns, time series.
\end{abstract}
\maketitle

\section{Introduction}

Since their introduction in the seminal paper by Bandt and Pompe \cite{ban_pom_02}, ordinal patterns have been widely applied in the context of time series analysis, and in particular in the context of dependence. 
For example, ordinal patterns have been used to uncover serial dependence in 
\cite{ban_23}, \cite{wei_schn_23} and \cite{wei_rui_kel_22}.
With regard to dependence \emph{between} time series, ordinal pattern dependence, which has been proposed in \cite{schn_14}, 
\revision{captures how} strong the co-movement between two data sets or two time series is. 
Applications of ordinal pattern dependence include, but are not limited to, hydrological data (cf. \cite{schn_fis_22b}, \cite{schn_fis_22a}), health (cf. \cite{wei_schn_23}) and finance (cf. \cite{schn_14}).
Moreover, limit theorems have been provided in \cite{schn_deh_17} and \cite{bet_buch_deh_mue_schn_woe_21} with regard to the short-range and long-range dependent cases, respectively. \revision{In \cite{schn_fis_22b} limit theorems were proven} under short-range dependence with regard to generalized ordinal patterns.

\revision{A question of interest is} how ordinal pattern dependence is related to other dependence measures. In \cite{betkenetal} the authors proved interesting results on the relationship towards e.g. Kendall's tau and Pearson's correlation coefficient. Furthermore, they aimed to show that ordinal pattern dependence fits into the axiomatic framework for multivariate measures of dependence between random vectors of same dimension which was suggested in \cite{grotheetal}.  However, there is an error in the proof of the fifth axiom. In general, this axiom cannot be verified for ordinal pattern dependence.  In Section \ref{section: counterexample} we provide a suitable counterexample. While the first 4 axioms in \cite{grotheetal} are (more or less) canonical, the fifth one is strongly inspired by \cite{schm_schm_blu_gai_rup_10} (which deals with dependence \emph{within} one random vector). Arguably one could have formulated this axiom differently. In  Section \ref{section: proof} we give a proof for the fifth axiom making different assumptions with regard to the (conditional) cumulative distribution functions and (conditional) survival functions, i.e., we make an alteration to the assumptions on concordance ordering.

\section{Preliminaries}

Let $(X_t, Y_t)_{t \in \bbn}$ be a stationary, bivariate time series with continuous marginal distributions. For fixed $d \geq 2$, we denote the random vectors of consecutive observations by 
\begin{align*}
    \bfX_t &:= (X_t, \ldots, X_{t+d-1}) \\
    \bfY_t &:= (Y_t, \ldots, Y_{t+d-1}).
\end{align*}
Due to the stationarity of the underlying process, the joint distribution of $(\bfX_t, \bfY_t)$ does not depend on the time index $t$, hence, it is sufficient to consider the random vector $(\bfX_1, \bfY_1)$, which, for simplicity, we denote by $(\bfX, \bfY)$. 

\begin{definition}\label{def: op rank}
    Let $S_d$ denote the set consisting of the $d!$ possible permutations of the integers $\{1, \ldots, d\}$. The \emph{ordinal pattern of order $d$} of a vector $\bfx = (x_1, \ldots, x_d) \in \bbr^d$ is defined as the permutation $\pi = (\pi_1, \ldots, \pi_d) \in S_d$ which satisfies 
    \begin{equation}
        \pi_i < \pi_j \Leftrightarrow x_i < x_j \text{ or } (x_i = x_j \text{ and } i < j) \label{eq: ordinal pattern}
    \end{equation}
    for all $i, j \in \{1, \ldots, d\}$.
\end{definition}

By $\Pi : \bbr^d \rightarrow S_d$ we denote the map which assigns the vector $\bfx$ with its corresponding pattern, that is, $\Pi(\bfx) = \pi$ satisfies \eqref{eq: ordinal pattern}. 
Even though Def. \ref{def: op rank} is different from the definition given in \cite{betkenetal}, they are equivalent in the sense that the respective permutations which describe the ordinal pattern at hand can be converted into each other via some permutation function (cf. \cite{schn_sil_23}). The advantage of the definition above is that it is more intuitive, since there one can interpret the entries of $\pi$ as ranks.
The value in the subsequent definition does not depend on the choice of the representation.

\begin{definition}
    The \emph{ordinal pattern dependence} between two random vectors $\bfX := (X_1, \ldots, X_d)$ and $\bfY := (Y_1, \ldots, Y_d)$ is defined by 
    \begin{equation*}
            \OPD_d(\bfX, \bfY) 
            := \frac{\Prob(\Pi(\bfX) = \Pi(\bfY)) - \sum_{\pi \in S_d} \Prob(\Pi(\bfX) = \pi) \Prob(\Pi(\bfY) = \pi)}{1 - \sum_{\pi \in S_d} \Prob(\Pi(\bfX) = \pi) \Prob(\Pi(\bfY) = \pi)}.
    \end{equation*}
\end{definition}

Intuitively speaking, one compares in the numerator the probability of coincident patterns with the hypothetical case of independence. This value is normed by the denominator. \revision{For any random vector $\bfX = (X_1, \ldots, X_d)$, let 
\begin{equation*}
    F_{\bfX}(\bfx) := \Prob(\bfX \leq \bfx) = \Prob(X_1 \leq x_1, \ldots, X_d \leq x_d) 
\end{equation*}
and
\begin{equation*}
    \overline{F}_{\bfX}(\bfx) := \Prob(\bfX \geq \bfx) = \Prob(X_1 \geq x_1, \ldots, X_d \geq x_d) 
\end{equation*}
denote the joint cumulative distribution function (cdf) and survival function, respectively. Note that for $d \geq 2$ in general $\overline{F}_{\bfX}(\bfx) \neq 1 - F_{\bfX}(\bfx)$.}
With the following definition, \cite{grotheetal} proposed an axiomatic theory for multivariate measures of dependence between random vectors of same dimension. For the \revision{readers'} convenience, we state it in the wording of \cite{betkenetal}:

\begin{definition}
    Let $L_0$ denote the space of random vectors with values in $\bbr^d$ on the common probability space $(\Omega, \mathcal{F}, \Prob)$. A function $\mu : L_0 \times L_0 \rightarrow \bbr$ is called a \emph{$d$-dimensional measure of dependence} if
    \begin{enumerate}
        \item it takes values in $[-1, 1]$,
        \item it is invariant with respect to simultaneous permutations of the components within two random vectors $\bfX, \bfY$,
        \item it is invariant with respect to increasing transformations of the components within two random vectors $\bfX, \bfY$,
        \item it is zero for two independent random vectors $\bfX, \bfY$,
        \item it respects concordance ordering, i.e., for two pairs of random vectors $\bfX, \bfY$ and $\astbfX, \astbfY$ which satisfy $\bfX \overset{D}{=} \astbfX$ and $\bfY \overset{D}{=} \astbfY$, it holds that 
        \begin{equation*}
            \brackets{\begin{array}{c}
                \bfX \\
                \bfY 
            \end{array}}
            \preccurlyeq_C
            \brackets{\begin{array}{c}
                \astbfX \\
                \astbfY 
            \end{array}}
            \Rightarrow \mu(\bfX, \bfY) \leq \mu(\astbfX, \astbfY).
        \end{equation*}
        Here, $\preccurlyeq_C$ denotes concordance ordering, i.e., 
        \begin{equation*}
            \brackets{\begin{array}{c}
                \bfX \\
                \bfY 
            \end{array}}
            \preccurlyeq_C
            \brackets{\begin{array}{c}
                \astbfX \\
                \astbfY 
            \end{array}}
            \text{ if and only if } 
            F_{\brackets{\substack{\bfX\\\bfY}}} \leq F_{\brackets{\substack{\astbfX\\\astbfY}}}
            \text{ and }
            \overline{F}_{\brackets{\substack{\bfX\\\bfY}}} \leq \overline{F}_{\brackets{\substack{\astbfX\\\astbfY}}},
        \end{equation*}
        \revision{where $\leq$ is meant pointwise.}
    \end{enumerate}
\end{definition}

In \cite{betkenetal} it was claimed that ordinal pattern dependence $\OPD_d$ is a $d$-dimensional measure of dependence (cf. \cite{betkenetal} Theorem 2.3). While the proof of the first four axioms is correct, there is an error regarding the treatment of the conditional probabilities and concordance ordering in the proof of the fifth axiom.

\section{A Counterexample}\label{section: counterexample}

We now give a counterexample showing that ordinal pattern dependence cannot satisfy the fifth axiom and thus does not fit into the axiomatic framework of multivariate measures of dependence as proposed by \cite{grotheetal}. 
As stated in \cite{betkenetal}, for the verification of the last axiom it is sufficient to restrict our considerations to the quantity $\Prob(\Pi(\bfX) = \Pi(\bfY))$, since the remaining terms of $\OPD_d(\bfX, \bfY)$ only relate to the distribution of $\bfX$ and $\bfY$ separately. (Note that $\bfX \overset{D}{=} \astbfX$ and $\bfY \overset{D}{=} \astbfY$.) For $d=2$, there are only two patterns, namely the upward pattern (1, 2) and the downward pattern (2, 1). Even in this case, the problem is 4-dimensional, since we have to deal with vectors of length $2d$ when considering dependence.

In practice ordinal pattern dependence is often considered in the context of stationary time series. This can be broken down to the  conditions $X_1 \overset{D}{=} X_2$ and $Y_1 \overset{D}{=} Y_2$ in our case. Our counterexample satisfies this additional property.

\begin{example}
We consider the probability densities $f = f_{\brackets{\substack{\bfX\\\bfY}}}$ and $f^\ast = f_{\brackets{\substack{\astbfX\\\astbfY}}}$ defined by \revision{
\begin{align*}
    f(x_1, x_2, y_1, y_2) &= \indicator{0 \leq x_1 \leq x_2 \leq 1} \cdot \indicator{1 < y_1 \leq y_2 \leq 2} 
    + \indicator{0 \leq x_2 < x_1 \leq 1} \cdot \indicator{1 < y_2 < y_1 \leq 2} \\
    &\hspace{10mm} + \indicator{1 < x_1 \leq x_2 \leq 2} \cdot \indicator{0 \leq y_1 \leq y_2 \leq 1}
    + \indicator{1 < x_2 < x_1 \leq 2} \cdot \indicator{0 \leq y_2 < y_1 \leq 1} \\
    f^\ast(x_1, x_2, y_1, y_2) &= \indicator{0 \leq x_1 \leq x_2 \leq 1} \cdot \indicator{0 \leq y_2 < y_1 \leq 1} 
    + \indicator{0 \leq x_2 < x_1 \leq 1} \cdot \indicator{1 < y_2 < y_1 \leq 2} \\
    &\hspace{10mm} + \indicator{1 < x_1 \leq x_2 \leq 2} \cdot \indicator{0 \leq y_1 \leq y_2 \leq 1}
    + \indicator{1 < x_2 < x_1 \leq 2} \cdot \indicator{1 < y_1 \leq y_2 \leq 2}.
\end{align*}
Obviously it holds 
\begin{align*}
    f_{\bfX}(x_1, x_2) 
    &= \frac{1}{2} \brackets{\indicator{0 \leq x_1 \leq x_2 \leq 1} + \indicator{0 \leq x_2 < x_1 \leq 1}} 
    + \frac{1}{2} \brackets{\indicator{1 < x_1 \leq x_2 \leq 2} + \indicator{1 < x_2 < x_1 \leq 2}} \\
    &= \frac{1}{2} \brackets{\indicator{0 \leq x_1, x_2 \leq 1} + \indicator{1 < x_1, x_2 \leq 2}} \\
    &= f_{\astbfX}(x_1, x_2)
\end{align*}
as well as 
\begin{equation*}
    f_{X_1}(x) = \frac{1}{2} \brackets{\indicator{0 \leq x \leq 1} + \indicator{1 < x \leq 2}} = \frac{1}{2} \indicator{0 \leq x \leq 2} = f_{X_2}(x)
\end{equation*}
such that $\bfX \overset{D}{=} \astbfX$ and $X_1 \overset{D}{=} X_2$.} Analogously it follows $\bfY \overset{D}{=} \astbfY$ and $Y_1 \overset{D}{=} Y_2$. \\
In order to prove the conditions on the joint cdfs and survival functions, 
\revision{first we take a closer look at the respective density functions and observe that the summands $\indicator{0 \leq x_2 < x_1 \leq 1} \cdot \indicator{1 < y_2 < y_1 \leq 2}$ and $\indicator{1 < x_1 \leq x_2 \leq 2} \cdot \indicator{0 \leq y_1 \leq y_2 \leq 1}$ appear in both $f$ and $f^\ast$. Therefore, for our purpose it is sufficient to only consider the auxiliary functions $h$ and $h^\ast$ defined by
\begin{align*}
    h(x_1, x_2, y_1, y_2) &= \indicator{0 \leq x_1 \leq x_2 \leq 1} \cdot \indicator{1 < y_1 \leq y_2 \leq 2} 
    + \indicator{1 < x_2 < x_1 \leq 2} \cdot \indicator{0 \leq y_2 < y_1 \leq 1} \\
    h^\ast(x_1, x_2, y_1, y_2) &= \indicator{0 \leq x_1 \leq x_2 \leq 1} \cdot \indicator{0 \leq y_2 < y_1 \leq 1} 
    + \indicator{1 < x_2 < x_1 \leq 2} \cdot \indicator{1 < y_1 \leq y_2 \leq 2}.
\end{align*}
Now, let us
}
consider Fig. \ref{fig: stationary counterexample auxiliary fcts}. There, it is depicted where the respective \revision{auxiliary functions $h$ and $h^\ast$} hold mass, where blue denotes the case $0 \leq x_1 \leq x_2 \leq 1$, while purple denotes the case \revision{$1 < x_2 < x_1 \leq 2$}. Note that each \revision{auxiliary function} contains both cases, and only these two cases in particular. Hence, it is sufficient to limit our considerations with regard to the comparison of the respective cdfs and survival functions to these two scenarios. \\
With regard to the cdfs, in case of $0 \leq x_1 \leq 1$ \textbf{or} $0 \leq x_2 \leq 1$ we only consider the mass denoted by the blue triangles, while we need to consider the triangles of both colors in case of $x_1, x_2 > 1$. Then, for both scenarios rectangles of the form $]-\infty, y_1] \times ]-\infty, y_2]$ contain more mass with regard to \revision{$h^\ast$} if compared to \revision{$h$}, so $F_{\brackets{\substack{\bfX\\\bfY}}} \leq F_{\brackets{\substack{\astbfX\\\astbfY}}}$ pointwise. In contrast, with regard to survival functions we need to proceed `the other way around': For $0 \leq x_1 \leq 1$ \textbf{or} $0 \leq x_2 \leq 1$ we only consider the purple triangles. In case of $x_1, x_2 > 1$ it is both colors again. Hence, for rectangles of the form $[y_1, \infty[ \times [y_2, \infty[$, \revision{$h^\ast$} holds more mass, and therefore $\overline{F}_{\brackets{\substack{\bfX\\\bfY}}} \leq \overline{F}_{\brackets{\substack{\astbfX\\\astbfY}}}$ pointwise, which shows that all conditions are satisfied. \\
\revision{Now, we return to considering the density functions $f$ and $f^\ast$. By construction of $f$, an increasing pattern $x_1 \leq x_2$ in the first component occurs if and only if it also occurs in the second component, that is $y_1 \leq y_2$. The same holds true for decreasing patterns. However, the situation is different with regard to $f^\ast$: There, coincident patterns are only predetermined by two of the four summands. The remaining two summands only allow for non-coincident patterns.}
Then, due to
\begin{equation*}
    \Prob(\Pi(\bfX) = \Pi(\bfY)) = 1 > \revision{\frac{1}{2}} = \Prob(\Pi(\astbfX) = \Pi(\astbfY))
\end{equation*}
it follows $\OPD_2(\bfX, \bfY) > \OPD_2(\astbfX, \astbfY)$. Thus, we have found two vectors which satisfy the property of concordance ordering showing the opposite behavior in terms of ordinal pattern dependence.
\end{example}

\begin{figure}[ht]
    \centering
    \begin{subfigure}{0.45\textwidth}
        \centering
        \begin{tikzpicture}[scale=1.2]
            \fill[green!5] (-1, -1) rectangle (0.5, 1.5);
            \filldraw[blue!10] (1, 1) -- (1, 2) -- (2, 2) -- (1, 1);
            \filldraw[purple!10] (0, 0) -- (1, 1) -- (1, 0) -- (0, 0);
            
            \draw[olive] (-1, 1.5) -- (0.5, 1.5) -- (0.5, -1);
            \filldraw[black] (0.5, 1.5) circle (1pt) node[anchor=west]{(0.5, 1.5)};
            
            \draw[->] (0, 0) -- (3, 0);
            \draw[->] (0, 0) -- (0, 3);
            \node at (3.2, -0.3) {$y_1$};
            \node at (-0.3, 3) {$y_2$};
            \foreach \x in {1, 2}{
                \draw (\x,0) -- (\x,-0.1);
                \node at (\x,-0.3) {$\x$};
                \draw (0, \x) -- (-0.1, \x);
                \node at (-0.3, \x) {$\x$};
            };
        \end{tikzpicture}
        \caption{}
    \end{subfigure}
    \begin{subfigure}{0.45\textwidth}
        \centering
        \begin{tikzpicture}[scale=1.2]
            \fill[green!5] (-1, -1) rectangle (0.5, 1.5);
            \filldraw[blue!10] (0, 0) -- (1, 1) -- (1, 0) -- (0, 0);
            \filldraw[purple!10] (1, 1) -- (1, 2) -- (2, 2) -- (1, 1);

            \draw[olive] (-1, 1.5) -- (0.5, 1.5) -- (0.5, -1);
            \filldraw[black] (0.5, 1.5) circle (1pt) node[anchor=west]{(0.5, 1.5)};
            
            \draw[->] (0, 0) -- (3, 0);
            \draw[->] (0, 0) -- (0, 3);
            \node at (3.2, -0.3) {$y_1$};
            \node at (-0.3, 3) {$y_2$};
            \foreach \x in {1, 2}{
                \draw (\x,0) -- (\x,-0.1);
                \node at (\x,-0.3) {$\x$};
                \draw (0, \x) -- (-0.1, \x);
                \node at (-0.3, \x) {$\x$};
            };
        \end{tikzpicture}
        \caption{}
    \end{subfigure}
    \caption{Illustration of the \revision{mass of $h$ (left) and $h^\ast$ (right)}: The functions attain the value \revision{1} for $0 \leq x_1 \leq x_2 \leq 1$ in the blue area and for \revision{$1 < x_2 < x_1 \leq 2$} in the purple area. Otherwise they are zero. Consider the rectangle $]-\infty, 0.5] \times ]-\infty, 1.5]$ (green) as an illustrative example with regard to the respective cumulative distribution functions.}
    \label{fig: stationary counterexample auxiliary fcts}
\end{figure}
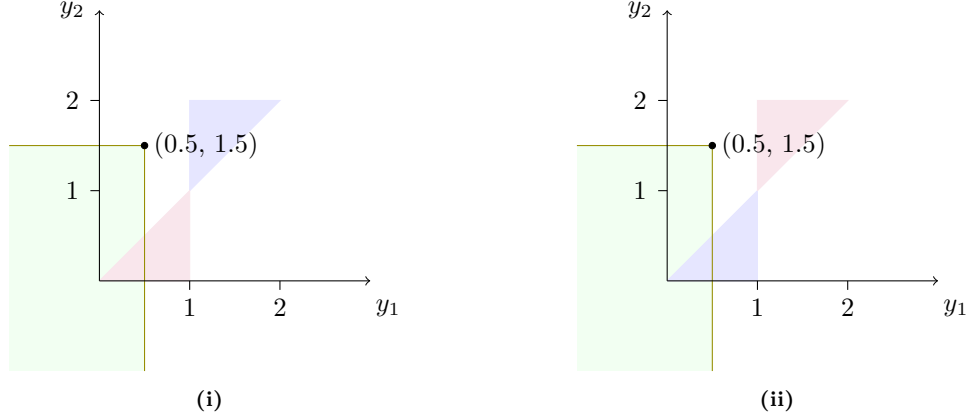

\section{Proof under slightly different assumptions}\label{section: proof}

Although ordinal pattern dependence does not satisfy the fifth axiom and hence does not fit into the axiomatic framework of multivariate measures of dependence as proposed by \cite{grotheetal}, we can prove a similar result under stronger/different assumptions.

\begin{theorem}\label{theo: OPD axiom 5 diff assump}
    Let $\bfX, \astbfX, \bfY$ and $\astbfY$ be $d$-dimensional random vectors with $\bfX \overset{D}{=} \astbfX$ and $\bfY \overset{D}{=} \astbfY$ Let $J = \{1, \ldots, d\}$ denote an index set.
    \newline (a) Suppose for all $I \subset J$, $I^c := J \setminus I$, with $I, I^c \neq \emptyset$ it holds
    \begin{equation*}
         F_{\left.\brackets{\substack{\bfX\\\bfY}}^{I^c}\right|\brackets{\substack{\bfX\\\bfY}}^I} 
        \leq 
        F_{\left.\brackets{\substack{\astbfX\\\astbfY}}^{I^c}\right|\brackets{\substack{\bfX\\\bfY}}^I}
        \hspace{1cm} \text{ and } \hspace{1cm}
        \overline{F}_{\left.\brackets{\substack{\bfX\\\bfY}}^{I^c}\right|\brackets{\substack{\bfX\\\bfY}}^I} 
        \leq 
        \overline{F}_{\left.\brackets{\substack{\astbfX\\\astbfY}}^{I^c}\right|\brackets{\substack{\bfX\\\bfY}}^I}
    \end{equation*}
    as well as
    \begin{equation*}
        F_{\left.\brackets{\substack{\bfX\\\bfY}}^{I^c}\right|\brackets{\substack{\astbfX\\\astbfY}}^I} 
        \leq 
        F_{\left.\brackets{\substack{\astbfX\\\astbfY}}^{I^c}\right|\brackets{\substack{\astbfX\\\astbfY}}^I}
        \hspace{1cm}\text{ and } \hspace{1cm}
        \overline{F}_{\left.\brackets{\substack{\bfX\\\bfY}}^{I^c}\right|\brackets{\substack{\astbfX\\\astbfY}}^I} 
        \leq 
        \overline{F}_{\left.\brackets{\substack{\astbfX\\\astbfY}}^{I^c}\right|\brackets{\substack{\astbfX\\\astbfY}}^I}
    \end{equation*}
    pointwise, where $\brackets{\substack{\bfX\\\bfY}}^I = (\bfX^I, \bfY^I)$ denotes the subvector of variables with indices in $I$. Then, it holds that 
    \begin{equation} 
        \OPD_d(\bfX, \bfY) \leq \OPD_d(\astbfX, \astbfY). \label{eq: OPD result}
    \end{equation}
    (b) Alternatively, the condition 
    \begin{equation}
        F_{\left.\brackets{\substack{\bfX\\\bfY}}^{I^c}\right|\brackets{\substack{\bfX\\\bfY}}^I} 
        \leq 
        F_{\left.\brackets{\substack{\astbfX\\\astbfY}}^{I^c}\right|\brackets{\substack{\bfX\\\bfY}}^I}
        \hspace{1cm}\text{ and } \hspace{1cm}
        \overline{F}_{\left.\brackets{\substack{\bfX\\\bfY}}^{I^c}\right|\brackets{\substack{\bfX\\\bfY}}^I} 
        \leq 
        \overline{F}_{\left.\brackets{\substack{\astbfX\\\astbfY}}^{I^c}\right|\brackets{\substack{\bfX\\\bfY}}^I} \label{eq: OPD assump 3}
    \end{equation}
    for all $I \subset J$ with $I \neq J$ is sufficient for \eqref{eq: OPD result} to hold. In particular, with $I = \emptyset$ we make a statement about the (unconditional) cdf and survival function.
\end{theorem}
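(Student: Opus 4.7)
The plan is to reduce the claim to a comparison of coincident-pattern probabilities and then bound each pattern's contribution by iterated conditioning. Since $\bfX \overset{D}{=} \astbfX$ and $\bfY \overset{D}{=} \astbfY$, one has $\Prob(\Pi(\bfX) = \pi) = \Prob(\Pi(\astbfX) = \pi)$ and $\Prob(\Pi(\bfY) = \pi) = \Prob(\Pi(\astbfY) = \pi)$ for every $\pi \in S_d$; hence the denominator $1 - \sum_{\pi} \Prob(\Pi(\bfX) = \pi)\Prob(\Pi(\bfY) = \pi)$ and the subtracted product term in the numerator of $\OPD_d$ coincide on the two sides of \eqref{eq: OPD result}, so the inequality reduces to $\Prob(\Pi(\bfX) = \Pi(\bfY)) \le \Prob(\Pi(\astbfX) = \Pi(\astbfY))$. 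Decomposing pattern-wise, fixing $\pi \in S_d$ with sort order $\sigma := \pi^{-1}$, and using continuity of the marginals to ignore ties, it is enough to bound, for each $\pi$, the chain-event probability $\Prob(E_\pi)$ with
\[
E_\pi := \{\, X_{\sigma(1)} < \cdots < X_{\sigma(d)},\; Y_{\sigma(1)} < \cdots < Y_{\sigma(d)} \,\}
\]
by its counterpart for $(\astbfX, \astbfY)$.

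For part (b) I would proceed by iterated conditioning. Writing $\xi_k := (X_{\sigma(k)}, Y_{\sigma(k)})$ and $I_k := \{\sigma(1), \dots, \sigma(k)\}$, conditioning on $(\bfX,\bfY)^{I_{d-1}}$ and applying the tower property yields
\[
\Prob(E_\pi) = \bbe\!\left[ \indicator{\xi_1 < \cdots < \xi_{d-1}} \cdot \overline{F}_{(\bfX,\bfY)^{\{\sigma(d)\}} \mid (\bfX,\bfY)^{I_{d-1}}}(\xi_{d-1}) \right],
\]
and assumption \eqref{eq: OPD assump 3} with $I = I_{d-1}$ dominates the conditional survival factor by the $(\astbfX, \astbfY)$-analogue pointwise. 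Iterating this peel-off for $k = d-1, d-2, \dots, 1$, one replaces the conditional distribution of $\xi_k$ given $(\bfX,\bfY)^{I_{k-1}}$ by its starred counterpart layer by layer; the outermost step is handled by the $I = \emptyset$ case of \eqref{eq: OPD assump 3}, i.e., the joint concordance ordering of $(\bfX,\bfY)$ and $(\astbfX, \astbfY)$. For part (a) the second family of conditional comparisons, with $(\astbfX, \astbfY)^I$-conditioning, plays the role the $I = \emptyset$ case plays in (b): it allows one to alternate the distribution of the conditioning subvector through the iteration without appealing to an unconditional comparison.

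The technical heart of the argument is the justification of each single replacement step. After the first peel-off the integrand is a product of the upper-orthant indicator $\indicator{\xi_{k-1} < \xi_k}$---which is supermodular in $\xi_k$---and a deeper-level conditional survival function of $(\astbfX, \astbfY)$ evaluated at $\xi_k$. To swap the distribution of $\xi_k \mid (\bfX,\bfY)^{I_{k-1}}$ for its starred version I would invoke a Tchen-type inequality: the assumed conditional cdf/survival comparisons force identical one-dimensional marginals of the two relevant 2-dimensional conditional distributions and give concordance domination of the starred one, so the integral of any supermodular function increases. The delicate point---exactly what distinguishes the assumptions of Theorem~\ref{theo: OPD axiom 5 diff assump} from the bare joint concordance ordering defeated by the counterexample of Section~\ref{section: counterexample}---is ensuring that the deeper-level conditional-survival factor preserves the two-increasing property in $\xi_k$ throughout the iteration; this is where I expect most of the technical work to live.
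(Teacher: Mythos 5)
Your skeleton coincides with the paper's: reduce $\OPD_d$ to the coincident-pattern probability, fix one pattern, disintegrate the chain event coordinate by coordinate, swap the conditional law at each layer, and (in (b)) handle the outermost layer with the $I=\emptyset$ comparison. The genuine gap is that the single swap step, which you yourself call the technical heart, is never carried out. Your Tchen argument needs the integrand at layer $k$, namely $\indicator{\xi_{k-1} < \xi_k}$ times the already-starred deeper-level conditional survival factor evaluated at $\xi_k$, to be supermodular in $\xi_k$; the equal-marginals part of Tchen's hypothesis is indeed available (having both the cdf and the survival comparison forces the univariate conditional marginals to coincide), but the supermodularity is not a consequence of the stated assumptions in any evident way: since $\xi_k$ enters that factor both as evaluation point and through the conditioning value, the factor need not even be monotone (if, say, $X^\ast_{\sigma(k+1)}$ is conditionally concentrated at $2X_{\sigma(k)}$, then $x \mapsto \Prob(X^\ast_{\sigma(k+1)} \ge x \mid X_{\sigma(k)} = x)$ equals $\indicator{x \ge 0}$, which increases rather than decreases), and products of supermodular functions are not supermodular in general. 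Deferring this to ``where the technical work lives'' leaves the key inequality unproved; it is exactly the point at which the naive argument (the one defeated by the counterexample of Section \ref{section: counterexample}) breaks, so it cannot be waved through.

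In addition, your treatment of (a) is only a gesture: ``alternating the distribution of the conditioning subvector through the iteration'' is not obviously licensed by the stated assumptions, and it is not how the paper proceeds. The paper performs the swaps for coordinates $2, \dots, d$ always conditioning on the \emph{unstarred} history (first family of assumptions), arriving at a hybrid probability such as $\Prob(X_1 \le X^\ast_2 \le X^\ast_3,\, Y_1 \le Y^\ast_2 \le Y^\ast_3)$, and then re-disintegrates from the opposite end, conditioning on the already-starred block $\brackets{\substack{\astbfX\\\astbfY}}^{I}$ (e.g.\ $I=\{2,3\}$ for $d=3$) and using the second family's cdf comparison to replace the remaining unstarred coordinate in one step. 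For the measure swaps the paper does not invoke Tchen but argues that the inner iterated integral is, up to scaling, a survival function which can be approximated via upper-orthant indicators, so that inequality \eqref{eq: uncond ineq indicator fct} (or its conditional analogue) together with bounded convergence applies; that is a structural claim of the same nature as, and in fact stronger than, the supermodularity you left open. So your proposal neither supplies the missing justification nor replaces it by a complete alternative, and as it stands it does not constitute a proof.
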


Note that Theorem 
10.2.2 of \cite{dudley}
ensures existence and uniqueness of the regular conditional distributions used above, such that the (conditional) cdfs and survival functions are defined for almost all $(\bfx, \bfy) \in \bbr^{2 \cdot |I|}$ by definition, respectively. \revision{This statement on regular conditional distributions is important as in general consideration of conditional probabilities where the event that is conditioned on has probability mass zero can be problematic.} The proof of Theorem \ref{theo: OPD axiom 5 diff assump} follows the ideas of \cite{betkenetal} and hence, makes use of the so called disintegration theorem. Basically, disintegration denotes the representation of a (conditional) expectation as an integral in terms of a (regular conditional) probability distribution (for more information, see, e.g., \cite{dudley} Chapter 10). Note further that in general there is neither equivalence nor any implication between the two sets of assumptions in Theorem \ref{theo: OPD axiom 5 diff assump}.

\begin{proof}[Proof of Theorem \ref{theo: OPD axiom 5 diff assump}]
    We give a proof for $d=2$ and $d=3$. Though the difficulties of the proof are not revealed for $d=2$, it gives an intuition how to proceed in case $d=3$. The proof for $d>3$ works analogously to the case $d=3$, but is notationally more complicated. \\
    Due to stationarity, it is sufficient to consider the first components of the time series, i.e., without loss of generality we consider
    \begin{equation*}
    \begin{split}
        &\bfX = (X_1, \ldots, X_d), \bfY = (Y_1, \ldots, Y_d), \\ 
        &\astbfX = (X^\ast_1, \ldots, X^\ast_d), \astbfY = (Y^\ast_1, \ldots, Y^\ast_d).
    \end{split}
    \end{equation*}
    Furthermore, since the remaining summands of $\OPD_d(\bfX, \bfY)$ only relate to the distributions of $\bfX$ and $\bfY$ separately, we restrict our considerations to \revision{$\Prob(\Pi(\bfX) = \Pi(\bfY)) = \sum_{\pi \in S_d} \Prob(\Pi(\bfX) = \Pi(\bfY) = \pi)$. Now, without loss of generality, it is sufficient to only consider the probability $\Prob(\Pi(\bfX) = \Pi(\bfY) = \pi)$ with regard to \textbf{one} arbitrary pattern $\pi \in S_d$, as for any other pattern $\pi' \neq \pi$ there exists a permutation $\sigma : \{1, \ldots, d\} \to \{1, \ldots, d\}$ such that 
    \begin{equation*}
        \Prob(\Pi(X_1, \ldots, X_d) = \Pi(Y_1, \ldots, Y_d) = \pi') 
        = \Prob(\Pi(X_{\sigma(1)}, \ldots, X_{\sigma(d)}) = \Pi(Y_{\sigma(1)}, \ldots, Y_{\sigma(d)}) = \pi).
    \end{equation*}
    Hence, everything can be reduced to a re-indexing such that considering these probabilities for the respective patterns is analoguous. Here, we choose the increasing patterns (1, 2) and (1, 2, 3), respectively.} \\
    We denote the conditional cdfs and survival functions by
    \begin{align*}
        \overline{F}^{x_{i_1}, \ldots, x_{i_k}, y_{i_1}, \ldots, y_{i_k}}_{\brackets{\substack{\bfX\\\bfY}}^{I^c}}(\bfx, \bfy) 
        &:= \overline{F}^{x_{i_1}, \ldots, x_{i_k}, y_{i_1}, \ldots, y_{i_k}}_{\left.\brackets{\substack{\bfX\\\bfY}}^{I^c}\right|\brackets{\substack{\bfX\\\bfY}}^I}(\bfx, \bfy) \\
        &:= \Prob\brackets{(\bfX, \bfY)^{I^c} \geq (\bfx, \bfy) | X_{i_j} = x_{i_j}, Y_{i_j} = y_{i_j} \forall j \in \{1, \ldots, k\}} 
        \\
        \overline{F}^{x_{i_1}, \ldots, x_{i_k}, y_{i_1}, \ldots, y_{i_k}}_{\brackets{\substack{\astbfX\\\astbfY}}^{I^c}}(\bfx, \bfy) 
        &:= \overline{F}^{x_{i_1}, \ldots, x_{i_k}, y_{i_1}, \ldots, y_{i_k}}_{\left.\brackets{\substack{\astbfX\\\astbfY}}^{I^c}\right|\brackets{\substack{\bfX\\\bfY}}^I}(\bfx, \bfy) \\
        &:= \Prob\brackets{(\astbfX, \astbfY)^{I^c} \geq (\bfx, \bfy) | X_{i_j} = x_{i_j}, Y_{i_j} = y_{i_j} \forall j \in \{1, \ldots, k\}} 
        \\
        \revision{ F^{x_{i_1}, \ldots, x_{i_k}, y_{i_1}, \ldots, y_{i_k}}_{\left.\brackets{\substack{\bfX\\\bfY}}^{I^c}\right|\brackets{\substack{\astbfX\\\astbfY}}^I}(\bfx, \bfy)} 
        &\revision{\ := \Prob\brackets{(\bfX, \bfY)^{I^c} \leq (\bfx, \bfy) | X^\ast_{i_j} = x_{i_j}, Y^\ast_{i_j} = y_{i_j} \forall j \in \{1, \ldots, k\}} } 
        \\
        \revision{ F^{x_{i_1}, \ldots, x_{i_k}, y_{i_1}, \ldots, y_{i_k}}_{\left.\brackets{\substack{\astbfX\\\astbfY}}^{I^c}\right|\brackets{\substack{\astbfX\\\astbfY}}^I}(\bfx, \bfy)} 
        &\revision{\ := \Prob\brackets{(\astbfX, \astbfY)^{I^c} \leq (\bfx, \bfy) | X^\ast_{i_j} = x_{i_j}, Y^\ast_{i_j} = y_{i_j} \forall j \in \{1, \ldots, k\}} }
    \end{align*}
    for all $\revision{I = } \ \{i_1, \ldots, i_k\} \subset \{1, \ldots, d\}$ with $1 \leq k \leq d-1$, \revision{$I^c := \{1, \ldots, d\} \setminus I$} and $(\bfx, \bfy) \in \bbr^{2(d-k)}$. \\
    Suppose $d=2$. We begin with the proof of (a) in this setting. Using disintegration as stated in Theorem 
    10.2.5 of \cite{dudley}
    (or alternatively the law of total expectation), it holds
    \begin{align*}
        \Prob(X_1 \leq X_2, Y_1 \leq Y_2) 
        &= \int_{\bbr^2} \Prob(X_1 \leq X_2, Y_1 \leq Y_2 | X_1 = x_1, Y_1 = y_1)  \ d\Prob_{\brackets{\substack{X_1\\Y_1}}}(x_1, y_1) \\
        &= \int_{\bbr^2} \Prob(x_1 \leq X_2, y_1 \leq Y_2 | X_1 = x_1, Y_1 = y_1)  \ d\Prob_{\brackets{\substack{X_1\\Y_1}}}(x_1, y_1) \\
        &= \int_{\bbr^2} \overline{F}_{\brackets{\substack{X_2\\Y_2}}}^{x_1, y_1}(x_1, y_1)  \ d\Prob_{\brackets{\substack{X_1\\X_2}}}(x_1, y_1).
    \end{align*}
    Using our assumption and reversing the previous steps yields
    \begin{align*}
        \int_{\bbr^2} \overline{F}_{\brackets{\substack{X_2\\Y_2}}}^{x_1, y_1}(x_1, y_1)  \ d\Prob_{\brackets{\substack{X_1\\Y_1}}}(x_1, y_1) 
        &\leq \int_{\bbr^2} \overline{F}_{\brackets{\substack{X^\ast_2\\Y^\ast_2}}}^{x_1, y_1}(x_1, y_1)  \ d\Prob_{\brackets{\substack{X_1\\Y_1}}}(x_1, y_1) \\
        &= \Prob(X_1 \leq X^\ast_2, Y_1 \leq Y^\ast_2).
    \end{align*}
    In an analogous way we obtain
    \begin{align*}
        \Prob(X_1 \leq X^\ast_2, Y_1 \leq Y^\ast_2)
        = &\int_{\bbr^2} \revision{F_{\left.\brackets{\substack{X_1\\Y_1}}\ \right| \ \brackets{\substack{X^\ast_2\\Y^\ast_2}}}^{x_2, y_2}(x_2, y_2)} \ d\revision{\Prob_{\brackets{\substack{X^\ast_2\\Y^\ast_2}}}(x_2, y_2)} \\
        \leq &\int_{\bbr^2} \revision{F_{\left.\brackets{\substack{X^\ast_1\\Y^\ast_1}} \ \right| \ \brackets{\substack{X^\ast_2\\Y^\ast_2}}}^{x_2, y_2}(x_2, y_2)} \ d\revision{\Prob_{\brackets{\substack{X^\ast_2\\Y^\ast_2}}}(x_2, y_2)} \\
        = &\Prob(X^\ast_1 \leq X^\ast_2, Y^\ast_1 \leq Y^\ast_2).
    \end{align*}
    Under the assumptions in (b) the desired inequality follows due to the fact that 
    \begin{equation*}
        \brackets{\begin{array}{c}
            \bfX \\
            \bfY 
        \end{array}}
        \preccurlyeq_C
        \brackets{\begin{array}{c}
            \astbfX \\
            \astbfY 
        \end{array}}
        \text{ implies }
        \brackets{\begin{array}{c}
            \bfX \\
            \bfY 
        \end{array}}^{I'}
        \preccurlyeq_C
        \brackets{\begin{array}{c}
            \astbfX \\
            \astbfY 
        \end{array}}^{I'}
    \end{equation*}
    for all subvectors of variables with indices in $I' \subset J$, i.e., removing dimensions does not influence which scenario has the larger dependence measure (\cite{grotheetal}). Note that 
    $\brackets{\begin{array}{c}
            \bfX \\
            \bfY 
        \end{array}}
        \preccurlyeq_C
        \brackets{\begin{array}{c}
            \astbfX \\
            \astbfY 
        \end{array}}$
    holds due to Equality \eqref{eq: OPD assump 3} with $I = \emptyset$. Hence, we deduce that 
    \begin{equation}
        \int 1_{[a, \infty[\times[b, \infty[}(x, y) \ d\Prob_{\brackets{\substack{X_i\\Y_i}}}(x, y) 
        \leq 
        \int 1_{[a, \infty[\times[b, \infty[}(x, y) \ d\Prob_{\brackets{\substack{X^\ast_i\\Y^\ast_i}}}(x, y) \label{eq: uncond ineq indicator fct}
    \end{equation}
    for all $a, b \in \bbr$ and $i\in\{1, \ldots, d\}$. Survival functions can be approximated by sums of indicator functions, \revision{i.e., $\overline{F}(x,y) = \lim_{n \to \infty} \sum^n_{k=1} \indicator{[a_k, \infty) \times[b_k, \infty)}(x,y)$ for constants $a_k, b_k \in \bbr$,} so the bounded convergence theorem yields
    \begin{align*}
        \int_{\bbr^2} \overline{F}_{\brackets{\substack{X^\ast_2\\Y^\ast_2}}}^{x_1, y_1}(x_1, y_1) \ d\Prob_{\brackets{\substack{X_1\\Y_1}}}(x_1, y_1)
        &\leq 
        \int_{\bbr^2} \overline{F}_{\brackets{\substack{X^\ast_2\\Y^\ast_2}}}^{x_1, y_1}(x_1, y_1) \ d\Prob_{\brackets{\substack{X^\ast_1\\Y^\ast_1}}}(x_1, y_1) \\
        &= \Prob(X^\ast_1 \leq X^\ast_2, Y^\ast_1 \leq Y^\ast_2).
    \end{align*}
    Now, suppose $d=3$. Defining
    \begin{equation*}
        \Prob^{x_1, y_1}(A) := \Prob(A|X_1 = x_1, Y_1 = y_1)
    \end{equation*}
    for any event $A$, and using disintegration (Theorem 
    10.2.5 of \cite{dudley})
    twice yields
    \begin{align*}
        &\Prob(X_1 \leq X_2 \leq X_3, Y_1 \leq Y_2 \leq Y_3) \\
        & \hspace{10mm} = \int_{\bbr^2} \Prob^{x_1, y_1}(x_1 \leq X_2 \leq X_3, y_1 \leq Y_2 \leq Y_3)\ d\Prob_{\brackets{\substack{X_1\\Y_1}}}(x_1, y_1) \\
        & \hspace{10mm} = \int_{\bbr^2} \int_{[x_1, \infty[ \times [y_1, \infty[} \Prob^{x_1, y_1}(x_2 \leq X_3, y_2 \leq Y_3 | X_2 = x_2, Y_2 = y_2) \ d\Prob^{x_1, y_1}_{\brackets{\substack{X_2\\Y_2}}}(x_2, y_2) \ d\Prob_{\brackets{\substack{X_1\\Y_1}}}(x_1, y_1).
    \end{align*}
    Since 
    \begin{align*}
        &\Prob^{x_1, y_1}(x_2 \leq X_3, y_2 \leq Y_3 | X_2 = x_2, Y_2 = y_2) \\
        & \hspace{15mm} = \Prob(x_2 \leq X_3, y_2 \leq Y_3 | X_1 = x_1, X_2 = x_2, Y_1 = y_1, Y_2 = y_2) \\
        & \hspace{15mm} = \overline{F}^{x_1, x_2, y_1, y_2}_{\brackets{\substack{X_3\\Y_3}}}(x_2, y_2),
    \end{align*}
    due to our assumptions in (a) it follows
    \begin{align*}
        &\Prob(X_1 \leq X_2 \leq X_3, Y_1 \leq Y_2 \leq Y_3) \\
        & \hspace{15mm} = \int_{\bbr^2} \int_{[x_1, \infty[ \times [y_1, \infty[} \overline{F}^{x_1, x_2, y_1, y_2}_{\brackets{\substack{X_3\\Y_3}}}(x_2, y_2) \ d\Prob^{x_1, y_1}_{\brackets{\substack{X_2\\Y_2}}}(x_2, y_2) \ d\Prob_{\brackets{\substack{X_1\\Y_1}}}(x_1, y_1) \\
        & \hspace{15mm} \leq \int_{\bbr^2} \int_{[x_1, \infty[ \times [y_1, \infty[} \overline{F}^{x_1, x_2, y_1, y_2}_{\brackets{\substack{X^\ast_3\\Y^\ast_3}}}(x_2, y_2) \ d\Prob^{x_1, y_1}_{\brackets{\substack{X_2\\Y_2}}}(x_2, y_2) \ d\Prob_{\brackets{\substack{X_1\\Y_1}}}(x_1, y_1).
    \end{align*}
    Furthermore,
    \begin{equation*}
        \overline{F}^{x_1, y_1}_{\brackets{\substack{\bfX\\\bfY}}^{I^c}}(x_2, x_3, y_2, y_3) \leq \overline{F}^{x_1, y_1}_{\brackets{\substack{\astbfX\\\astbfY}}^{I^c}}(x_2, x_3, y_2, y_3)
    \end{equation*}
    with $I^c = \{2, 3\}$ yields
    \begin{align*}
        \overline{F}^{x_1, y_1}_{\brackets{\substack{X_2\\Y_2}}}(x_2, y_2) 
        &= \overline{F}^{x_1, y_1}_{\brackets{\substack{\bfX\\\bfY}}^{I^c}}(x_2, -\infty, y_2, -\infty) \\
        &\leq \overline{F}^{x_1, y_1}_{\brackets{\substack{\astbfX\\\astbfY}}^{I^c}}(x_2, -\infty, y_2, -\infty) \\
        &= \overline{F}^{x_1, y_1}_{\brackets{\substack{X^\ast_2\\Y^\ast_2}}}(x_2, y_2)
    \end{align*}
    such that 
    \begin{equation*}
        \int 1_{[a, \infty[\times[b, \infty[}(x_2, y_2) \ d\Prob^{x_1, y_1}_{\brackets{\substack{X_2\\Y_2}}}(x_2, y_2) \leq \int 1_{[a, \infty[\times[b, \infty[}(x_2, y_2) \ d\Prob^{x_1, y_1}_{\brackets{\substack{X^\ast_2\\Y^\ast_2}}}(x_2, y_2)
    \end{equation*}
    for all $a, b \in \bbr$. Conditional survival functions are indeed survival functions, which can be approximated by sums of indicator functions of the form above. Hence, by bounded convergence theorem it holds
    \begin{align*}
        &\int_{\bbr^2} \int_{[x_1, \infty[ \times [y_1, \infty[} \overline{F}^{x_1, x_2, y_1, y_2}_{\brackets{\substack{X^\ast_3\\Y^\ast_3}}}(x_2, y_2) \ d\Prob^{x_1, y_1}_{\brackets{\substack{X_2\\Y_2}}}(x_2, y_2) \ d\Prob_{\brackets{\substack{X_1\\Y_1}}}(x_1, y_1) \\
        & \hspace{15mm} \leq \int_{\bbr^2} \int_{[x_1, \infty[ \times [y_1, \infty[} \overline{F}^{x_1, x_2, y_1, y_2}_{\brackets{\substack{X^\ast_3\\Y^\ast_3}}}(x_2, y_2) \ d\Prob^{x_1, y_1}_{\brackets{\substack{X^\ast_2\\Y^\ast_2}}}(x_2, y_2) \ d\Prob_{\brackets{\substack{X_1\\Y_1}}}(x_1, y_1) \\
        & \hspace{15mm} = \Prob(X_1 \leq X^\ast_2 \leq X^\ast_3, Y_1 \leq Y^\ast_2 \leq Y^\ast_3).
    \end{align*}
    By defining
    \begin{equation*}
        \widetilde{\Prob}^{x_3, y_3}(A) := \Prob(A|X^\ast_3 = x_3, Y^\ast_3 = y_3)
    \end{equation*}
    for any event $A$, in an analogous way it holds
    \begin{align*}
        &\Prob(X_1 \leq X^\ast_2 \leq X^\ast_3, Y_1 \leq Y^\ast_2 \leq Y^\ast_3) \\
        & \hspace{9mm} = \int_{\bbr^2} \widetilde{\Prob}^{x_3, y_3}(X_1 \leq X^\ast_2 \leq x_3, Y_1 \leq Y^\ast_2 \leq y_3) \ d\Prob_{\brackets{\substack{X^\ast_3\\Y^\ast_3}}}(x_3, y_3) \\
        & \hspace{9mm} = \int_{\bbr^2} \int_{]-\infty,x_3]\times]-\infty, y_3]} \widetilde{\Prob}^{x_3, y_3}(X_1 \leq x_2, Y_1 \leq \revision{y_2}|X^\ast_2 = x_2, Y^\ast_2 = y_2) \ d\widetilde{\Prob}^{x_3, y_3}_{\brackets{\substack{X^\ast_2\\Y^\ast_2}}}(x_2, y_2) \ d\Prob_{\brackets{\substack{X^\ast_3\\Y^\ast_3}}}(x_3, y_3) \\
        & \hspace{9mm} = \int_{\bbr^2} \int_{]-\infty,x_3]\times]-\infty, y_3]} \revision{F^{x_2, x_3, y_2, y_3}_{\left.\brackets{\substack{\bfX\\\bfY}}^{I^c}\right|\brackets{\substack{\astbfX\\\astbfY}}^I}(x_2, y_2)} \ d\widetilde{\Prob}^{x_3, y_3}_{\brackets{\substack{X^\ast_2\\Y^\ast_2}}}(x_2, y_2) \ d\Prob_{\brackets{\substack{X^\ast_3\\Y^\ast_3}}}(x_3, y_3) \\
        & \hspace{9mm} \leq \int_{\bbr^2} \int_{]-\infty,x_3]\times]-\infty, y_3]} 
        \revision{F^{x_2, x_3, y_2, y_3}_{\left.\brackets{\substack{\astbfX\\\astbfY}}^{I^c}\right|\brackets{\substack{\astbfX\\\astbfY}}^I}(x_2, y_2)} \ d\widetilde{\Prob}^{x_3, y_3}_{\brackets{\substack{X^\ast_2\\Y^\ast_2}}}(x_2, y_2) \ \ d\Prob_{\brackets{\substack{X^\ast_3\\Y^\ast_3}}}(x_3, y_3) \\
        & \hspace{9mm} = \Prob(X^\ast_1 \leq X^\ast_2 \leq X^\ast_3, Y^\ast_1 \leq Y^\ast_2 \leq Y^\ast_3)
    \end{align*}
    \revision{with $I = \{2, 3\}$ and $I^c = \{1\}$, accordingly.} Under the alternative assumption (b), the function
    \begin{equation*}
        \overline{H}(x_1, y_1) := \int_{[x_1, \infty[ \times [y_1, \infty[} \overline{F}^{x_1, x_2, y_1, y_2}_{\brackets{\substack{X^\ast_3\\Y^\ast_3}}}(x_2, y_2) \ d\Prob^{x_1, y_1}_{\brackets{\substack{X^\ast_2\\Y^\ast_2}}}(x_2, y_2)
    \end{equation*}
    can be, at least up to scaling, considered as a survival function, so by using an approximation by sums of indicator functions as above and inequality \eqref{eq: uncond ineq indicator fct} it follows 
    \begin{align*}
        \Prob(X_1 \leq X_2 \leq X_3, Y_1 \leq Y_2 \leq Y_3) 
        &\leq \int_{\bbr^2} \int_{[x_1, \infty[ \times [y_1, \infty[} \overline{F}^{x_1, x_2, y_1, y_2}_{\brackets{\substack{X^\ast_3\\Y^\ast_3}}}(x_2, y_2) \ d\Prob^{x_1, y_1}_{\brackets{\substack{X^\ast_2\\Y^\ast_2}}}(x_2, y_2) \ d\Prob_{\brackets{\substack{X_1\\Y_1}}}(x_1, y_1) \\
        &= \int_{\bbr^2} \overline{H}(x_1, y_1) \ d\Prob_{\brackets{\substack{X_1\\Y_1}}}(x_1, y_1) \\
        &\leq \int_{\bbr^2} \overline{H}(x_1, y_1) \ d\Prob_{\brackets{\substack{X^\ast_1\\Y^\ast_1}}}(x_1, y_1) \\
        & = \Prob(X^\ast_1 \leq X^\ast_2 \leq X^\ast_3, Y^\ast_1 \leq Y^\ast_2 \leq Y^\ast_3).
    \end{align*}
\end{proof}

Note that the assumptions on the conditional cdfs and conditional survival functions seem quite restrictive. Nevertheless, there are still natural examples of random vectors which satisfy these as we illustrate in the following. 

\begin{example}
    Let $(\bfX, \bfY)$ and $(\astbfX, \astbfY)$ be 4-dimensional random vectors defined on a common probability space with
    \begin{equation*}
        \bfX = (X_1, X_2), \quad \astbfX = (X^\ast_1, X_2), 
        \quad 
        \bfY = (Y_1, Y_2), \quad \astbfY = (Y^\ast_1, Y_2),
    \end{equation*}
    and $\bfX \overset{D}{=} \astbfX$ and $\bfY \overset{D}{=} \astbfY$, i.e., in particular it holds 
    \begin{equation*}
        X_1 \overset{D}{=} X^\ast_1 \text{ and } Y_1 \overset{D}{=} Y^\ast_1.
    \end{equation*}
    Furthermore, it is enough to suppose that $(X_1, Y_1)$ and $(X^\ast_1, Y^\ast_1)$ are both independent of $(X_2, Y_2)$, respectively. Therefore, it is sufficient to only consider the distributions of $(X_1, Y_1)$ and $(X^\ast_1, Y^\ast_1)$, respectively. \\
    Let us start with the construction of a discrete example in order to extend it to the continuous case afterwards in a natural way. \\
    Suppose the probability distributions of $(X_1, Y_1)$ and $(X^\ast_1, Y^\ast_1)$ are given by 
    \begin{align*}
        \Prob((X_1, Y_1) = (1, 3)) = \Prob((X_1, Y_1) = (2, 2)) &= \frac{1}{2} \\
        \Prob((X^\ast_1, Y^\ast_1) = (1, 2)) = \Prob((X^\ast_1, Y^\ast_1) = (2, 3)) &= \frac{1}{2}.
    \end{align*}
    In particular, we observe that $X_1 \overset{D}{=} X^\ast_1$ and $Y_1 \overset{D}{=} Y^\ast_1$. (For an overview, see Fig. \ref{fig: example restrictiveness pmf}.) 
    The cdfs as well as the survival functions are outlined in Tab. \ref{tab: example restrictiveness cmf and survival fct}. We observe
    \begin{equation*}
        F_{\brackets{\substack{X_1 \\ Y_1}}} \leq F_{\brackets{\substack{X^\ast_1 \\ Y^\ast_1}}} 
        \text{ and } 
        \overline{F}_{\brackets{\substack{X_1 \\ Y_1}}} \leq \overline{F}_{\brackets{\substack{X^\ast_1 \\ Y^\ast_1}}}
    \end{equation*}
    which implies 
    \begin{equation*}
        F_{\left.\brackets{\substack{\bfX\\\bfY}}^{I^c}\right|\brackets{\substack{\bfX\\\bfY}}^I} 
        \leq 
        F_{\left.\brackets{\substack{\astbfX\\\astbfY}}^{I^c}\right|\brackets{\substack{\bfX\\\bfY}}^I}
        \text{ and } 
        \overline{F}_{\left.\brackets{\substack{\bfX\\\bfY}}^{I^c}\right|\brackets{\substack{\bfX\\\bfY}}^I} 
        \leq 
        \overline{F}_{\left.\brackets{\substack{\astbfX\\\astbfY}}^{I^c}\right|\brackets{\substack{\bfX\\\bfY}}^I}
    \end{equation*} 
    \\
    \begin{figure}[h] 
        \centering
        \begin{subfigure}{0.5\textwidth}
            \centering
            \begin{tikzpicture}
                \node (1st outcome) at (0, 3) {(1, 2)};
                \node (2nd outcome) at (0, 2) {(1, 3)};
                \node (3rd outcome) at (0, 1) {(2, 2)};
                \node (4th outcome) at (0, 0) {(2, 3)};

                \node (1st rv) at (-2.5, 1.5) {$\brackets{\substack{X_1 \\ Y_1}}$};
                \node (2nd rv) at (2.5, 1.5) {$\brackets{\substack{X^\ast_1 \\ Y^\ast_1}}$};

                \draw[thick, ->] (1st rv.east) -- node[above] {$\frac{1}{2}$} (2nd outcome.west);
                \draw[thick, ->] (1st rv.east) -- node[below] {$\frac{1}{2}$} (3rd outcome.west);
                \draw[thick, ->] (2nd rv.west) -- node[above] {$\frac{1}{2}$} (1st outcome.east);
                \draw[thick, ->] (2nd rv.west) -- node[below] {$\frac{1}{2}$} (4th outcome.east);
            \end{tikzpicture}
            \caption{}
        \end{subfigure}
        \begin{subfigure}{0.45\textwidth}
            \centering
            \begin{tikzpicture}
                \node (1st outcome) at (0, 3) {1};
                \node (2nd outcome) at (0, 2) {2};
                \node (3rd outcome) at (0, 1) {3};

                \node (1st rvs) at (-2, 2) {$X_1, X^\ast_1$};
                \node (2nd rvs) at (2, 2) {$Y_1, Y^\ast_1$};

                \draw[thick, ->] (1st rvs.east) -- node[above] {$\frac{1}{2}$} (1st outcome.west);
                \draw[thick, ->] (1st rvs.east) -- node[below] {$\frac{1}{2}$} (2nd outcome.west);
                \draw[thick, ->] (2nd rvs.west) -- node[above] {$\frac{1}{2}$} (2nd outcome.east);
                \draw[thick, ->] (2nd rvs.west) -- node[below] {$\frac{1}{2}$} (3rd outcome.east);
            \end{tikzpicture}
            \caption{}
        \end{subfigure}
        \caption{Illustration of the respective probability mass functions (pmfs): joint pmfs in (i) and pmfs of the respective marginals in (ii). Here, the arrows imply that the random vector of interest attains the respective outcome with the indicated probability (of $1/2$).}
        \label{fig: example restrictiveness pmf}
    \end{figure}
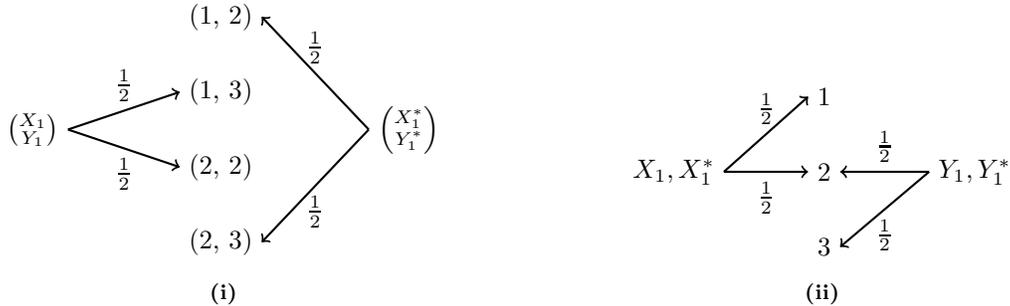
    \begin{table}[h]
        \centering
        \caption{Cdfs and survival functions of $(X_1, Y_1)$ and $(X^\ast_1, Y^\ast_1)$, respectively.}
        \vskip-.3cm
        \begin{tabular}{ccccc}
            \toprule
            $(x_1, y_1)$ &$F_{\brackets{\substack{X_1 \\ Y_1}}}$ &$F_{\brackets{\substack{X^\ast_1 \\ Y^\ast_1}}}$
            &$\overline{F}_{\brackets{\substack{X_1 \\ Y_1}}}$ &$\overline{F}_{\brackets{\substack{X^\ast_1 \\ Y^\ast_1}}}$ \\
            \midrule 
            (1, 2) &$0$ &$\frac{1}{2}$ &$1$ &$1$ \\
            (1, 3) &$\frac{1}{2}$ &$\frac{1}{2}$ &$\frac{1}{2}$ &$\frac{1}{2}$ \\
            (2, 2) &$\frac{1}{2}$ &$\frac{1}{2}$ &$\frac{1}{2}$ &$\frac{1}{2}$ \\
            (2, 3) &$1$ &$1$ &$0$ &$\frac{1}{2}$ \\
            \bottomrule
        \end{tabular}
        \label{tab: example restrictiveness cmf and survival fct}
    \end{table} 
    
    as well as
    \begin{equation*}
        F_{\left.\brackets{\substack{\bfX\\\bfY}}^{I^c}\right|\brackets{\substack{\astbfX\\\astbfY}}^I} 
        \leq 
        F_{\left.\brackets{\substack{\astbfX\\\astbfY}}^{I^c}\right|\brackets{\substack{\astbfX\\\astbfY}}^I}
        \text{ and } 
        \overline{F}_{\left.\brackets{\substack{\bfX\\\bfY}}^{I^c}\right|\brackets{\substack{\astbfX\\\astbfY}}^I} 
        \leq 
        \overline{F}_{\left.\brackets{\substack{\astbfX\\\astbfY}}^{I^c}\right|\brackets{\substack{\astbfX\\\astbfY}}^I}
    \end{equation*}
    for all $I \subsetneq \{1, 2\}$ and $I^c = \{1, 2\} \setminus I$ due to the demanded independence, therefore, both sets of assumptions in Theorem \ref{theo: OPD axiom 5 diff assump} are satisfied. \\
    All that is left is to extend the random vectors to the continuous case. For this, let $f_{\brackets{\substack{X_2\\Y_2}}}$ denote the probability density function of $(X_2, Y_2)$, and define the densities $f$ and $f^\ast$ of $(X_1, Y_1)$ and $(X^\ast_1, Y^\ast_1)$, respectively, by
    \begin{align*}
        f(x_1, y_1) 
        := f_{\brackets{\substack{X_1\\Y_1}}}(x_1, y_1)
        &= \frac{1}{2} \brackets{1_{]0,1[}(x_1) \cdot 1_{]2, 3[}(y_1) + 1_{]1, 2[}(x_1) \cdot 1_{]1, 2[}(y_1) } \\
        f^\ast(x_1, y_1) 
        := f_{\brackets{\substack{X^\ast_1\\Y^\ast_1}}}(x_1, y_1) 
        &= \frac{1}{2} \brackets{1_{]0,1[}(x_1) \cdot 1_{]1, 2[}(y_1) + 1_{]1, 2[}(x_1) \cdot 1_{]2, 3[}(y_1) }.
    \end{align*}
    Let $(X_1, Y_1)$ and $(X^\ast_1, Y^\ast_1)$ be independent of $(X_2, Y_2)$, respectively, such that the densities of $(\bfX, \bfY)$ and $(\astbfX, \astbfY)$ are given by
    \begin{align*}
        f_{\brackets{\substack{\bfX\\\bfY}}}(x_1, x_2, y_1, y_2) 
        &= f_{\brackets{\substack{X_1\\Y_1}}}(x_1, y_1) \cdot f_{\brackets{\substack{X_2\\Y_2}}}(x_2, y_2) \\
        f_{\brackets{\substack{\astbfX\\\astbfY}}}(x_1, x_2, y_1, y_2) 
        &= f_{\brackets{\substack{X^\ast_1\\Y^\ast_1}}}(x_1, y_1) \cdot f_{\brackets{\substack{X_2\\Y_2}}}(x_2, y_2).
    \end{align*}
    Obviously, it holds $X_1 \overset{D}{=} X^\ast_1$ and $Y_1 \overset{D}{=} Y^\ast_1$. Moreover, it holds $F_{\brackets{\substack{X_1\\Y_1}}} \leq F_{\brackets{\substack{X^\ast_1\\Y^\ast_1}}}$ and $\overline{F}_{\brackets{\substack{X_1\\Y_1}}} \leq \overline{F}_{\brackets{\substack{X^\ast_1\\Y^\ast_1}}}$ (see Fig. \ref{fig: example restrictiveness density}). Therefore, again both sets of assumptions in Theorem \ref{theo: OPD axiom 5 diff assump} are satisfied.
    \begin{figure}[t]
        \centering
        \begin{subfigure}{0.45\textwidth}
            \centering
            \begin{tikzpicture}[scale=0.8]
                \fill[green!5] (-1, -1) rectangle (1.5, 2.5);
                \filldraw[blue!10] (1, 1) rectangle (2, 2);
                \filldraw[blue!10] (0, 2) rectangle (1, 3);
            
                \draw[olive] (-1, 2.5) -- (1.5, 2.5) -- (1.5, -1);
                \filldraw[black] (1.5, 2.5) circle (1pt) node[anchor=west]{(1.5, 2.5)};
            
                \draw[->] (0, 0) -- (4, 0);
                \draw[->] (0, 0) -- (0, 4);
                \node at (4.2, -0.3) {$x_1$};
                \node at (-0.3, 4) {$y_1$};
                \foreach \x in {1, 2, 3}{
                    \draw (\x,0) -- (\x,-0.2);
                    \node at (\x,-0.5) {$\x$};
                    \draw (0, \x) -- (-0.2, \x);
                    \node at (-0.5, \x) {$\x$};
                };
            \end{tikzpicture}
            \caption{}
        \end{subfigure}
        \begin{subfigure}{0.45\textwidth}
            \centering
            \begin{tikzpicture}[scale=0.8]
                \fill[green!5] (-1, -1) rectangle (1.5, 2.5);
                \filldraw[blue!10] (0, 1) rectangle (1, 2);
                \filldraw[blue!10] (1, 2) rectangle (2, 3);

                \draw[olive] (-1, 2.5) -- (1.5, 2.5) -- (1.5, -1);
                \filldraw[black] (1.5, 2.5) circle (1pt) node[anchor=west]{(1.5, 2.5)};
            
                \draw[->] (0, 0) -- (4, 0);
                \draw[->] (0, 0) -- (0, 4);
                \node at (4.2, -0.3) {$x_1$};
                \node at (-0.3, 4) {$y_1$};
                \foreach \x in {1, 2, 3}{
                    \draw (\x,0) -- (\x,-0.2);
                    \node at (\x,-0.5) {$\x$};
                    \draw (0, \x) -- (-0.2, \x);
                    \node at (-0.5, \x) {$\x$};
                };
            \end{tikzpicture}
            \caption{}
        \end{subfigure}
        \caption{Illustration of the density functions $f$ and $f^\ast$: The blue squares denote the places where the respective functions hold mass. Considering the exemplary rectangle $]-\infty, 1.5] \times ]-\infty, 2.5]$ (green) we observe that $f$ has less cumulative mass than $f^\ast$. This holds for any point $(x_1, y_1) \in \bbr^2$. The same is true for rectangles of the form $[x_1, \infty[ \times [y_1, \infty[ \subset \bbr^2$.}
        \label{fig: example restrictiveness density}
    \end{figure}
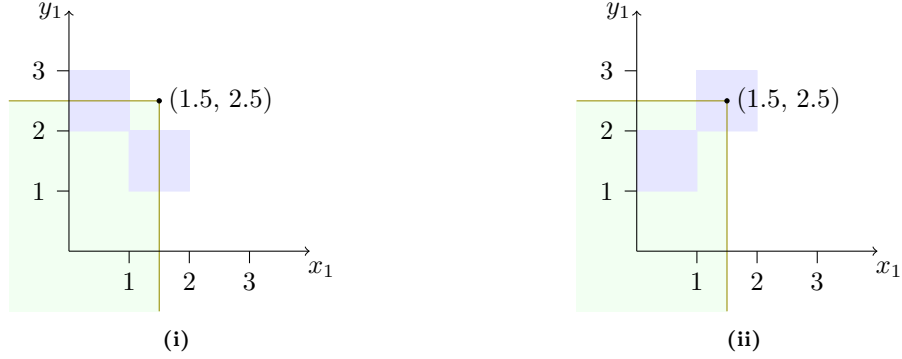
\end{example}

\pagebreak
\begin{example}
    In the previous example we assumed independence of $(X_1, Y_1)$ and $(X^\ast_1, Y^\ast_1)$ with regard to $(X_2, Y_2)$. Let us also present an example with dependent random vectors. For this, assume that $(X_2, Y_2) \in \{c_1, c_2\}$ and
    \[
    \Prob((X_2, Y_2) = c_1) = \frac{1}{2} = \Prob((X_2, Y_2) = c_2).
    \]
    We consider the dependence structure as illustrated in Fig. \ref{fig: example 2 restrictiveness pmf} (i) and (ii). The respective conditional cdfs and conditional survival functions are outlined in Tables \ref{tab: example 2 restrictiveness cmf and survival fct} (i) and (ii). We observe that the conditions on the conditional cdfs and survival functions required by the first set of assumptions are satisfied. Moreover, even the conditions on the joint distributions which are necessary for the alternative set of assumptions are fulfilled, as Fig. \ref{fig: example 2 restrictiveness pmf} (iii) and Table \ref{tab: example 2 restrictiveness cmf and survival fct} (iii) indicate. Extending this to the continuous case in an analogous manner as we have done before concludes this example.
    \begin{figure}[!ht]
        \centering
        \begin{subfigure}{1\textwidth}
            \centering
            \scalebox{0.75}{
            \begin{tikzpicture}
                \node (1st outcome) at (0, 3) {(1, 2)};
                \node (2nd outcome) at (0, 2) {(1, 3)};
                \node (3rd outcome) at (0, 1) {(2, 2)};
                \node (4th outcome) at (0, 0) {(2, 3)};

                \node (1st rv) at (-4, 1.5) {$\brackets{\substack{X_1 \\ Y_1} \Big| (X_2, Y_2) = c_1}$};
                \node (2nd rv) at (4, 1.5) {$\brackets{\substack{X^\ast_1 \\ Y^\ast_1} \Big| (X_2, Y_2) = c_1}$};

                \draw[thick, ->] (1st rv.east) -- node[above] {$\frac{1}{2}$} (2nd outcome.west);
                \draw[thick, ->] (1st rv.east) -- node[below] {$\frac{1}{2}$} (3rd outcome.west);
                \draw[thick, ->] (2nd rv.west) -- node[above] {$\frac{1}{2}$} (1st outcome.east);
                \draw[thick, ->] (2nd rv.west) -- node[below] {$\frac{1}{2}$} (4th outcome.east);
            \end{tikzpicture}}
            \caption{}
        \end{subfigure}
        \begin{subfigure}{1\textwidth}
            \centering
            \scalebox{0.75}{
            \begin{tikzpicture}
                \filldraw[white] (0, 4.7) circle (2pt);
                \node (1st outcome) at (0, 4.2) {(1, 2)};
                \node (2nd outcome) at (0, 2.8) {(1, 3)};
                \node (3rd outcome) at (0, 1.4) {(2, 2)};
                \node (4th outcome) at (0, 0) {(2, 3)};

                \node (1st rv) at (-4, 2) {$\brackets{\substack{X_1 \\ Y_1} \Big| (X_2, Y_2) = c_2}$};
                \node (2nd rv) at (4, 2) {$\brackets{\substack{X^\ast_1 \\ Y^\ast_1} \Big| (X_2, Y_2) = c_2}$};

                \draw[thick, ->] (1st rv.east) -- node[above] {$\frac{1}{4}$} (1st outcome.west);
                \draw[thick, ->] (1st rv.east) -- node[above] {$\frac{1}{4}$} (2nd outcome.west);
                \draw[thick, ->] (1st rv.east) -- node[below] {$\frac{1}{4}$} (3rd outcome.west);
                \draw[thick, ->] (1st rv.east) -- node[below] {$\frac{1}{4}$} (4th outcome.west);
                \draw[thick, ->] (2nd rv.west) -- node[above] {$\frac{1}{4}$} (1st outcome.east);
                \draw[thick, ->] (2nd rv.west) -- node[above] {$\frac{1}{4}$} (2nd outcome.east);
                \draw[thick, ->] (2nd rv.west) -- node[below] {$\frac{1}{4}$} (3rd outcome.east);
                \draw[thick, ->] (2nd rv.west) -- node[below] {$\frac{1}{4}$} (4th outcome.east);
            \end{tikzpicture}}
            \caption{}
        \end{subfigure}
        \begin{subfigure}{0.4\textwidth}
            \centering
            \scalebox{0.75}{
            \begin{tikzpicture}
                \filldraw[white] (0, 4.7) circle (2pt);
                \node (1st outcome) at (0, 4.2) {(1, 2)};
                \node (2nd outcome) at (0, 2.8) {(1, 3)};
                \node (3rd outcome) at (0, 1.4) {(2, 2)};
                \node (4th outcome) at (0, 0) {(2, 3)};

                \node (1st rv) at (-2.5, 2) {$\brackets{\substack{X_1 \\ Y_1}}$};
                \node (2nd rv) at (2.5, 2) {$\brackets{\substack{X^\ast_1 \\ Y^\ast_1}}$};

                \draw[thick, ->] (1st rv.east) -- node[above] {$\frac{1}{8}$} (1st outcome.west);
                \draw[thick, ->] (1st rv.east) -- node[above] {$\frac{3}{8}$} (2nd outcome.west);
                \draw[thick, ->] (1st rv.east) -- node[below] {$\frac{3}{8}$} (3rd outcome.west);
                \draw[thick, ->] (1st rv.east) -- node[below] {$\frac{1}{8}$} (4th outcome.west);
                \draw[thick, ->] (2nd rv.west) -- node[above] {$\frac{3}{8}$} (1st outcome.east);
                \draw[thick, ->] (2nd rv.west) -- node[above] {$\frac{1}{8}$} (2nd outcome.east);
                \draw[thick, ->] (2nd rv.west) -- node[below] {$\frac{1}{8}$} (3rd outcome.east);
                \draw[thick, ->] (2nd rv.west) -- node[below] {$\frac{3}{8}$} (4th outcome.east);
            \end{tikzpicture}}
            \caption{}
        \end{subfigure}
        \begin{subfigure}{0.4\textwidth}
            \centering
            \scalebox{0.75}{
            \begin{tikzpicture}
                \node (1st outcome) at (0, 3.1) {1};
                \node (2nd outcome) at (0, 2.1) {2};
                \node (3rd outcome) at (0, 1.1) {3};
                \node (space) at (0, 0) {};

                \node (1st rvs) at (-2, 2.1) {$X_1, X^\ast_1$};
                \node (2nd rvs) at (2, 2.1) {$Y_1, Y^\ast_1$};

                \draw[thick, ->] (1st rvs.east) -- node[above] {$\frac{1}{2}$} (1st outcome.west);
                \draw[thick, ->] (1st rvs.east) -- node[below] {$\frac{1}{2}$} (2nd outcome.west);
                \draw[thick, ->] (2nd rvs.west) -- node[above] {$\frac{1}{2}$} (2nd outcome.east);
                \draw[thick, ->] (2nd rvs.west) -- node[below] {$\frac{1}{2}$} (3rd outcome.east);
            \end{tikzpicture}}
            \caption{}
        \end{subfigure}
        \caption{Illustration of the respective probabilty mass functions (pmfs): conditional pmfs in (i) and (ii), joint pmfs in (iii) and pmfs of the respective marginals in (iv). The arrows imply that the random vector of interest attains the respective outcome with the indicated probability.}
        \label{fig: example 2 restrictiveness pmf}
    \end{figure} 
    \begin{table}[!ht]
        \centering
        \caption{(Conditional) cdfs and survival functions of $(X_1, Y_1)$ and $(X^\ast_1, Y^\ast_1)$, respectively. Here, $F^c_{\brackets{\substack{X_1 \\ Y_1}}}$ denotes the cdf of $(X_1, Y_1 | (X_2, Y_2) = c)$.}
            \vskip-.3cm
        \begin{subtable}{0.45\textwidth}
            \centering
            \begin{tabular}{ccccc}
                \toprule
                $(x_1, y_1)$ &$F^{c_1}_{\brackets{\substack{X_1 \\ Y_1}}}$ &$F^{c_1}_{\brackets{\substack{X^\ast_1 \\ Y^\ast_1}}}$
                &$\overline{F}^{c_1}_{\brackets{\substack{X_1 \\ Y_1}}}$ &$\overline{F}^{c_1}_{\brackets{\substack{X^\ast_1 \\ Y^\ast_1}}}$ \\
                \midrule 
                (1, 2) &$0$ &$\frac{1}{2}$ &$1$ &$1$ \\
                (1, 3) &$\frac{1}{2}$ &$\frac{1}{2}$ &$\frac{1}{2}$ &$\frac{1}{2}$ \\
                (2, 2) &$\frac{1}{2}$ &$\frac{1}{2}$ &$\frac{1}{2}$ &$\frac{1}{2}$ \\
                (2, 3) &$1$ &$1$ &$0$ &$\frac{1}{2}$ \\
                \bottomrule
            \end{tabular}
            \caption{}
        \end{subtable}
        \begin{subtable}{0.45\textwidth}
            \centering
            \begin{tabular}{ccccc}
                \toprule
                $(x_1, y_1)$ &$F^{c_2}_{\brackets{\substack{X_1 \\ Y_1}}}$ &$F^{c_2}_{\brackets{\substack{X^\ast_1 \\ Y^\ast_1}}}$
                &$\overline{F}^{c_2}_{\brackets{\substack{X_1 \\ Y_1}}}$ &$\overline{F}^{c_2}_{\brackets{\substack{X^\ast_1 \\ Y^\ast_1}}}$ \\
                \midrule 
                (1, 2) &$\frac{1}{4}$ &\revision{$\frac{1}{4}$} &$1$ &$1$ \\
                (1, 3) &$\frac{1}{2}$ &$\frac{1}{2}$ &$\frac{1}{2}$ &$\frac{1}{2}$ \\
                (2, 2) &$\frac{1}{2}$ &$\frac{1}{2}$ &$\frac{1}{2}$ &$\frac{1}{2}$ \\
                (2, 3) &$1$ &$1$ &$\frac{1}{4}$ &$\frac{1}{4}$ \\
                \bottomrule
            \end{tabular}
            \caption{}
        \end{subtable}
        \begin{subtable}{0.45\textwidth}
            \centering
            \begin{tabular}{ccccc}
                \toprule
                $(x_1, y_1)$ &$F_{\brackets{\substack{X_1 \\ Y_1}}}$ &$F_{\brackets{\substack{X^\ast_1 \\ Y^\ast_1}}}$
                &$\overline{F}_{\brackets{\substack{X_1 \\ Y_1}}}$ &$\overline{F}_{\brackets{\substack{X^\ast_1 \\ Y^\ast_1}}}$ \\
                \midrule 
                (1, 2) &$\frac{1}{8}$ &$\frac{3}{8}$ &$1$ &$1$ \\
                (1, 3) &$\frac{1}{2}$ &$\frac{1}{2}$ &$\frac{1}{2}$ &$\frac{1}{2}$ \\
                (2, 2) &$\frac{1}{2}$ &$\frac{1}{2}$ &$\frac{1}{2}$ &$\frac{1}{2}$ \\
                (2, 3) &$1$ &$1$ &$\frac{1}{8}$ &$\frac{3}{8}$ \\
                \bottomrule
            \end{tabular}
            \caption{}
        \end{subtable}
        \label{tab: example 2 restrictiveness cmf and survival fct}
    \end{table}
\end{example}

\newpage

 \section*{Acknowledgments} \revision{We thank the Editor-in-Chief and the Associate Editor. Furthermore, we would like to thank two anonymous referees for their valuable comments which have helped us to improve the paper.} This work was supported by the German Research Foundation (DFG), grant number SCHN 1231-3/2.

\bibliographystyle{abbrv}
\nocite{*} 
\bibliography{references}

\begin{thebibliography}{10}

\bibitem{ban_19}
C.~Bandt.
\newblock Small order patterns in big time series: A practical guide.
\newblock {\em Entropy}, 21(6):613, 2019.

\bibitem{ban_23}
C.~Bandt.
\newblock Statistics and contrasts of order patterns in univariate time series.
\newblock {\em Chaos: An Interdisciplinary Journal of Nonlinear Science}, 33(3), 2023.

\bibitem{ban_pom_02}
C.~Bandt and B.~Pompe.
\newblock Permutation entropy: a natural complexity measure for time series.
\newblock {\em Physical Review Letters}, 88(17):174102, 2002.

\bibitem{bet_buch_deh_mue_schn_woe_21}
A.~Betken, J.~Buchsteiner, H.~Dehling, I.~Münker, A.~Schnurr, and J.~H. Woerner.
\newblock Ordinal patterns in long-range dependent time series.
\newblock {\em Scandinavian Journal of Statistics}, 48(3):969--1000, 2021.

\bibitem{betkenetal}
A.~Betken, H.~Dehling, I.~N{\"u}{\ss}gen, and A.~Schnurr.
\newblock Ordinal pattern dependence as a multivariate dependence measure.
\newblock {\em Journal of Multivariate Analysis}, 186:104798, 2021.

\bibitem{changpollard}
J.~T. Chang and D.~Pollard.
\newblock Conditioning as disintegration.
\newblock {\em Statistica Neerlandica}, 51(3):287--317, 1997.

\bibitem{dudley}
R.~M. Dudley.
\newblock {\em Real Analysis and Probability}, volume~1.
\newblock Wadsworth \& Brooks/Cole, 1989.

\bibitem{grotheetal}
O.~Grothe, J.~Schnieders, and J.~Segers.
\newblock Measuring association and dependence between random vectors.
\newblock {\em Journal of Multivariate Analysis}, 123:96--110, 2014.

\bibitem{schm_schm_blu_gai_rup_10}
F.~Schmid, R.~Schmidt, T.~Blumentritt, S.~Gai{\ss}er, and M.~Ruppert.
\newblock Copula-based measures of multivariate association.
\newblock In {\em Copula Theory and Its Applications: Proceedings of the Workshop Held in Warsaw, 25-26 September 2009}, pages 209--236. Springer, 2010.

\bibitem{schn_14}
A.~Schnurr.
\newblock An ordinal pattern approach to detect and to model leverage effects and dependence structures between financial time series.
\newblock {\em Statistical Papers}, 55:919--931, 2014.

\bibitem{schn_deh_17}
A.~Schnurr and H.~Dehling.
\newblock Testing for structural breaks via ordinal pattern dependence.
\newblock {\em Journal of the American Statistical Association}, 112(518):706--720, 2017.

\bibitem{schn_fis_22b}
A.~Schnurr and S.~Fischer.
\newblock Generalized ordinal patterns allowing for ties and their applications in hydrology.
\newblock {\em Computational Statistics \& Data Analysis}, 171:107472, 2022.

\bibitem{schn_fis_22a}
A.~Schnurr and S.~Fischer.
\newblock An ordinal procedure to detect change points in the dependence structure between non-stationary time series.
\newblock {\em Engineering Proceedings}, 18(1), 2022.

\bibitem{schn_sil_23}
A.~Schnurr and A.~Silbernagel.
\newblock A comparison of different representations of ordinal patterns and their usability in data analysis.
\newblock In {\em 2023 International Conference on Electrical, Computer and Energy Technologies (ICECET)}, pages 1--7. IEEE, 2023.

\bibitem{wei_22}
C.~H. Wei{\ss}.
\newblock Non-parametric tests for serial dependence in time series based on asymptotic implementations of ordinal-pattern statistics.
\newblock {\em Chaos: An Interdisciplinary Journal of Nonlinear Science}, 32(9):093107, 2022.

\bibitem{wei_schn_23}
C.~H. Wei{\ss} and A.~Schnurr.
\newblock Generalized ordinal patterns in discrete-valued time series: Non-parametric testing for serial dependence.
\newblock {\em Journal of Nonparametric Statistics}, pages 1--27, 2023.

\bibitem{wei_tes_23}
C.~H. Wei{\ss} and M.~C. Testik.
\newblock Nonparametric control charts for monitoring serial dependence based on ordinal patterns.
\newblock {\em Technometrics}, pages 1--11, 2023.

\bibitem{wei_rui_kel_22}
C.~H. Weiß, M.~{Ruiz Marín}, K.~Keller, and M.~Matilla-García.
\newblock Non-parametric analysis of serial dependence in time series using ordinal patterns.
\newblock {\em Computational Statistics \& Data Analysis}, 168:107381, 2022.

\end{thebibliography}

\end{document}